\newif\iffinal
\finaltrue 

\documentclass[letterpaper,11pt,reqno]{amsart}
\RequirePackage[utf8]{inputenc}
\usepackage[portrait,margin=2.25cm]{geometry}
\usepackage{style,todonotes}

\usepackage{color} 
\newcommand{\pd}[1]{{#1}}

\newcommand{\prob}{\mbox{{prob}}}

\begin{document}
\title[Multiplicative Cascade under Strong Disorder]{On Normalized Multiplicative Cascades under Strong Disorder}
\author[Dey]{Partha S.~Dey$^1$}
\address{$^1$Dept.~of Mathematics, University of Illinois at Urbana-Champaign, Altgeld Hall, 1409 W.~Green St, Urbana, IL 61801}
\author[Waymire]{Edward C.~Waymire$^2$}
\address{$^2$Dept.~of Mathematics, Oregon State University, Kidder Hall 368, Corvallis, OR 97331}
\email{psdey@illinois.edu, waymire@math.orst.edu}
\subjclass[2010]{Primary: ; Secondary: ;}
\keywords{Multiplicative cascade, Tree Polymer, Strong disorder, Partition function.}

\begin{abstract}

Multiplicative cascades, under weak or strong disorder, refer to sequences of positive
 random measures $\mu_{n,\beta},  n = 1,2,\dots$, 
parameterized by a positive disorder parameter $\beta$, and
defined on the Borel $\sigma$-field ${\mathcal B}$ of $\partial T = \{0,1,\dots b-1\}^\infty$ 
for the product topology. 
The normalized cascade is defined by the corresponding sequence of
random probability measures $\prob_{n,\beta}:= Z_{n,\beta}^{-1}\mu_{n,\beta}, n = 1,2\dots,$ normalized to
a probability by the partition function $Z_{n,\beta}$.  In this note, a recent result
of Madaule~\cite[2011]{madaule} is used to explicitly construct
a family of tree indexed probability measures $\prob_{\infty,\beta}$ for 
strong disorder parameters $\beta > \beta_c$, almost surely defined on a common probability space.  
Moreover, viewing 
$\{\prob_{n,\beta}: \beta > \beta_c\}_{n=1}^\infty$ as a sequence
of probability measure valued stochastic process leads to finite dimensional weak convergence
in distribution to a probability 
measure valued process $\{\prob_{\infty,\beta}:
\beta > \beta_c\}$.   The  limit process is constructed from the tree-indexed random field of derivative 
martingales, and the Brunet-Derrida-Madaule decorated Poisson process.   A number of 
corollaries are provided to illustrate the utility of this construction.

\end{abstract}
\maketitle

\section{Introduction}\label{sec:int}

The relationship between branching random walks and multiplicative cascades  has a long history, going back to the early works of~\cite{big}
and of \cite{kahanepeyriere}, respectively.   Recent results from the latter are exploited in the present note to obtain the distribution of the normalized multiplicative cascade probability under strong disorder conditions.

{\it Branching random walks,} as discretizations of branching Brownian motion,  provide a natural probabilistic structure that 
 is known to occur, for example, in the context of  reaction-dispersion equations of the type introduced by Fisher
, Kolmogorov, Petrovskii and Piskounov; 
 see \cite{kyprianou} and references therein.    

Originating in statistical  turbulence and other areas in which singular intermittent random  distributions arise, {\it multiplicative cascades} are random measures that define prototypical models of disorder; see \cite{kahanepeyriere} for a seminal mathematical formulation  whose inspiration is attributed to Benoit Mandelbrot.  Much of the early work on multiplicative cascades involved the fine-scale (multifractal) structure of a limiting cascade distribution under conditions that have come to be referred to as \emph{weak disorder}.  In such cases the total mass defines a  positive martingale sequence with a non-trivial a.s.~limit.   In particular, the cascade measure can easily be normalized to a (random) probability measure to obtain an a.s.~weak limit.  On the other hand, while compactness of the tree boundary ensures tightness, such almost sure weak limits are not expected to exist under \emph{strong disorder}.  

However,  as shown in~\cite{john_way} and in~\cite{barralrhodesvargas}, 
respectively, there is a weak limit in probability at  \emph{critical strong disorder}, or the so-called  \emph{boundary} case, and a weak limit in distribution under strict (non-critical) strong disorder.  In particular a (random) probability can be defined in the infinite path limit under strong disorder.  
This latter result will also follow from the analysis presented here, but the focus of this note is rather on the structure of these weak limits and their mutual relation as a stochastic process indexed by the disorder parameter $\beta >\beta_c$.  Toward this goal an integral representation is provided together with a
 limiting process, in the sense of weak convergence finite
dimensional distributions,
defined almost surely as a function of $\beta$ on a single
probability space.  This is then used to provide mutual absolute continuity between the disorder limits, formulae for the Radon-Nikodym derivatives,  and  an explicit description of the genealogy near the root as  corollaries.  Moreover, it is shown that as
a probability measure valued process, the limit process indexed
by $\beta > \beta_c$
 has a.s.~continuous paths in the weak-* topology; in fact in the total-variation norm.
The basic approach is to construct a tree-indexed derivative martingale random field, and then exploit recent consequences of  superposability due to~\cite{madaule,brunetderrida}.
 
\section{Background Definitions and Notation}\label{sec:back}
To clearly describe the focus of this note
it is convenient to introduce some standard notation defining a multiplicative cascade, and its normalization to a  probability.  While the results may be more generally formulated for cascades on more general classes of trees, including Galton-Watson supercritical branching processes subject to a Kesten-Stigum condition on the offspring distribution,
we restrict the presentation to directed binary trees for simplicity of exposition.

Consider the infinite binary tree defined by the following set of vertices $
\vT=\bigcup _{n=0}^\infty \{-1,+1\}^n$ with edges defined by pairs of vertices of the form $v = (v_1,\dots,v_n),$ and its parent $v|(n-1) = (v_1,\dots,v_{n-1})$, and rooted at $\emptyset$ in correspondence with $ \{-1,1\}^0$.  The \emph{boundary} of $\vT$ is defined by 
$\partial \vT=\{-1,1\}^\dN $, with the product topology. 
An \emph{$\infty$-tree path}  is denoted by
$s=(s_1,s_2,\ldots)\in \partial T$.  We will also consider \emph{finite tree paths} 
$s=(s_1,\ldots,s_n)\in \vT\backslash \{\emptyset\}$ of length $|s| = n$, and for $s=(s_1,s_2,\ldots)\in\partial \vT$, continue to use the notation
$s|n:=(s_1,s_2,\ldots,s_n)$, read ``$s$ restricted to $n$'', for  truncation.  Also, for $v\in \vT, k=|v|\le n$ we define 
\[
\gD(v):=\{s\in \partial\vT: s|k=v\} \text{ and } \gD_n(v):=\{s\in \{-1,+1\}^n : s|k=v\}
\]
as the $\infty$-paths passing through the vertex $v$ and the vertices at level $n$ below the vertex $v$, respectively. 

Suppose one is given  a collection $\{X_v\mid v\in \vT\}$ of i.i.d.~positive random variables indexed by $\vT$ and defined on a probability space $(\gO,\cF,\pr)$.  Denote by $X$ a generic random variable having the common distribution of each $X_v$ and assume that $\E(X)=1$. 
Let
$\gl(ds)=\left( \frac{1}{2}\delta_{+1}(ds) + \frac{1}{2}\delta_{-1}(ds) \right)^\dN$
on $(\partial \vT, \cB)$, and define the sequence of positive (random) measures $\mu_n(ds), n\ge 1$, 
absolutely continuous with respect to $\gl(ds)$, via
their sequence of Radon-Nikodym derivatives given by
\begin{align}\label{RNDcascade}
	\frac{d{\mu}_n}{d\gl}(s)= \prod_{j=1}^n X_{s|j}. 
	\end{align}
Note that $\int_{\partial \vT}f(s)\mu_n(ds), n\ge 1,$ is a bounded martingale for any bounded, continuous
function $f$ on $\partial \vT$.  The corresponding sequence of normalized (random) probability measures $\text{prob}_n(ds)$ on $\partial T$ is defined by	
\begin{align}\label{RNDprob}
	\frac{d\text{prob}_n}{d\gl}(s)=M_n^{-1}\prod_{j=1}^n X_{s|j},  
\end{align}
where  the  \emph{partition function}  $M_n$ normalizes $\mu_n$, to a probability measure. Note that
\begin{align}\label{partition}
\begin{split}
	M_n&=  2^{-n}\sum_{|s|=n} \prod_{j=1}^n X_{s|j}
\end{split}                                              
\end{align}
has mean $1$. 
The sequence of non-normalized measures $\mu_n(ds), n\ge 1,$ is referred to as a \emph{multiplicative cascade} and is the main object of our analysis.

In this framework, the notions of \emph{weak disorder} and \emph{strong disorder}, e.g., see  Bolthausen~\cite{bolthausen91} for these notions in the  present context, 
provide a well-known dichotomy defined in terms of the asymptotic behavior of the partition function as follows.  First note that the sequence of (normalized) partition functions  $M_n, n\ge 1,$  is a positive martingale, so  $M_\infty:=\lim_{n\rightarrow\infty}M_n$ exists a.s.~in $(\gO,\cF,\pr)$.  
By positivity of the factors defining the path probabilities, the event $[M_\infty=0]$ is a tail event and thus by Kolmogorov's zero-one law, $\pr({M}_\infty=0)$ must equal zero or one.  Kahane and Peyri\`ere~\cite{kahanepeyriere} for multiplicative cascades, and (independently) Biggins, Hammersley and Kingman~\cite{big}, 
for branching random walks, had already obtained the following precise conditions for the disorder dichotomy:
\begin{align}\label{eq:dicho}
	\pr({M}_\infty>0) = 1 \quad & \Longleftrightarrow \quad \E(X\log X) < \log2.
\end{align}
In the case for which $[M_\infty>0]$ a.s., the cascade is said to be in a state of \emph{weak disorder}, whereas if $[M_\infty=0]$ a.s., the cascade is in a state of \emph{strong disorder}. Note that the deterministic environment $X\equiv 1$ a.s.~can be regarded informally as the ``weakest'' of the weak disorder regimes where $M_{n}\equiv 1$ and $\mu_{n}(ds)\equiv \gl(ds)$.
The special case 
\begin{align}\label{boundary}
	\E(X\log X)= \log 2, 
\end{align}
belongs to the strong disorder regime as  {\it critical disorder}, or the {\it boundary case}.  For example, in the case when $X=\exp(-\gb N -\gb^2/2)$ with $N$ being standard normal distributed, the boundary case corresponds to $\gb=\sqrt{2\log 2}$, with the strong disorder regime obtained for $\gb\ge \sqrt{2\log 2}$. 

To describe the limit distribution of  the (re-scaled) partition function in the critical case $\E(X\log X)=\log2$ or $\E(X(\log2-\log X))=0$, let us recall the 
{\it derivative martingale}  in the boundary case of the branching random walk;
see \cite{bigkyp04}. We have that
\begin{align}
D_n = 2^{-n}\sum_{|s|=n}  \sum_{j=1}^n\bigl(\log 2 -  \log X_{s|j}\bigr)\prod_{j=1}^n X_{s|j},  \ n\ge 1, 
\end{align}
is an $L^1$-bounded martingale having an a.s.~positive limit $D_\infty$, and referred
to as the {\it derivative martingale}; see  \cite{kyprianou} for additional historic background in the contexts of
branching random walk and branching Brownian motion.  Under some natural regularity conditions on $X$, Aidekon and Shi~\cite{aidshi} proved that $\sqrt{n}M_n/D_n$ converges in probability to $\sqrt{2/\pi\gs^2}$ where 
$\gs^2:=\E(X(\log2-\log X)^2)$. 

Additional insight into the relevance of the derivative martingale  to multiplicative cascade theory can be obtained by considering the basic stochastic cascade recursion 
	\begin{align}\label{randomrecur}                     
		B \equald A_{-1}B_{-1} + A_{+1}B_{+1}, 
	\end{align}
	where $B_{\pm 1}$ are i.i.d.~non-negative r.v.s having the same distribution as $B$, and $A_{\pm 1}$ are i.i.d.~non-negative r.v.s having mean $1/2$, and independent of $B_{\pm 1}$.  The recursion~\eqref{randomrecur} is a well-studied recursion in a variety of contexts, see \cite{bigkyp05}
	
	Under weak disorder $B = M_\infty$ is the nontrivial solution for $A_{\pm1}=\frac{1}{2}X_{\pm 1}$, unique up to positive constant multiples.  However, at strong disorder one has $M_{\infty} = 0$ a.s; \ie a trivial solution to~\eqref{randomrecur}.   Nonetheless there is a  nontrivial solution in the (non-lattice) boundary case,  namely a constant multiple of $D_\infty$; see~\cites{kyprianou,hu_shi}.  
	
	It is generally well-known as a result of early work originating in \cite{durrettliggett83} 
	 that under strong disorder the solution (fixed point) of the random recursion (\ref{randomrecur})  coincides with a multiple of a  L\'evy stable process stopped at $D_\infty$;  see~ \cite{kyprianou} for a summary and extensions.   The results to follow provide a more detailed analysis of 
the structure of this solution, through its explicit connections to the extremes of the associated branching random walk, that facilitates the almost sure construction of the limit probabilities 
$prob_\infty(ds)$. 

To close this section let us note that \emph{tree polymers} provide an essentially equivalent formulation that can
be described as follows. Namely,
when $X=\exp(-\gb W)/\E e^{-\gb W}$ for some $\gb>0$, the sequence of random probability measures $\{\text{prob}_n(ds): n\ge 1\}$ is also referred to as a \emph{tree polymer} on $(\partial \vT, \mathcal{B})$ at inverse temperature $\gb$.  
Assuming that $W$ is a random variable with $\varphi(\gb):=\E e^{-\gb W}
<\infty$ for all $\gb\ge 0$, the dichotomy~\eqref{eq:dicho} for the r.v.~$X=\varphi(\gb)^{-1}e^{-\gb W}$ gives the critical disorder as $\gb=\gb_{c}$ where $(\gb^{-1}\log(2\varphi(\gb)))'\bigr|_{\gb=\gb_c}=0$ and the weak disorder as $\gb<\gb_c$.  By centering and scaling appropriately, \ie working with $\gb_c W+\log(2\varphi(\gb_c))$ instead of $W$, without loss of generality we can assume the so-called \emph{boundary case} defined by
\begin{align}\label{eq:ct}
	\E(e^{-W})=\frac12 \text{ and } \E(We^{-W})=0.
\end{align}
Thus with $X_{v}=\varphi(\gb)^{-1}e^{-\gb W_v}, v\in\vT$ the strong disorder corresponds to $\gb\ge 1$. 
The exponent  defined by  \eqref{eq:bndryalpha} is given by $\alpha = 1/\beta$ in this framework.

We define the energy of a finite path $s\in\vT$ as
\[
	H(s) =\sum_{i=1}^{|s|}W_{s\mid i} \text{ for } s\in\vT
\]
and will sometimes use $H_{n}(s)$ instead of $H(s)$ when $|s|=n$ to emphasize the dependence on $n$. Also, in this context the \emph{partition function} is defined as 
$Z_{n}(\gb) :=\sum_{|s|=n} e^{-\gb H(s)}$. We also define, for $v\in\vT,|v|\le n$
\begin{align}\label{eq:vertexpf}
Z_{n}(\gb;v)= \sum_{s\in \gD_n(v)} e^{-\gb (H(s)-H(v))}.
\end{align}
Then \eqref{eq:vertexpf}  can be understood as the partition function at the vertex $v$. Clearly $Z_{n}(\gb)=Z_{n}(\gb;\emptyset)$. 
One may note that the scaling of the partition function implies a certain centering of the branching random walkers induced by the path energies $H_n(s), |s| = n,$ that may explicitly be expressed as follows:
\begin{align*}
	n^{{\frac32}\gb} Z_n(\gb)           
	= \sum_{|s|=n}e^{-\gb(H_n(s) -{\frac32}\log n)}.                                          
\end{align*}

When $X=\varphi(\gb)^{-1}e^{-\gb W}$, we will use $\mu_{n,\gb}$ and $\text{prob}_{n,\gb}$ for~\eqref{RNDcascade} and \eqref{RNDprob}, respectively. Note that  the normalization constant $M_n$ in \eqref{partition}, is the same as $(2\varphi(\gb))^{-n}Z_n(\gb)$. Also, we have for $v\in\vT, |v|<n$
\begin{align*}
\mu_{n,\gb}(\gD(v)) &= (2\varphi(\gb))^{-n}e^{-\gb H(v)} Z_{n}(\gb;v)\\
 \text{ and }\text{prob}_{n,\gb}(\gD(v)) &= e^{-\gb H(v)} Z_{n}(\gb;v)/Z_{n}(\gb).
\end{align*}

We will also use the following definitions for finite-dimensional convergence. 

\begin{defn}\label{def21}
 Let $\partial T = \{0,1\}^\infty$ with the product topology.  
Let $C_{\mbox{\tiny fin}}$ denote the set of bounded, (continuous) functions $g:\partial T\to R$ depending on 
finitely many coordinates. Let ${\cF}_n, n = 1,2,\dots,$
be the filtration generated, respectively, by the coordinate projections $\pi_j, j\le n$, where 
$\pi_j(t) = t_j$. Suppose that $\nu_n$ is a sequence of probabilities on $(\partial T, {\cF}_n)$,
and ${\cB} = {\cF}_\infty$. Then we say $\nu_n$ converges in finite-dimensional distribution 
to $\nu$ if $\int_{\partial T}g(t)\nu_n(dt)\to \int_{\partial T}g(t)\nu(dt)$ for all $g\in C_{\mbox{\tiny fin}}$.
\end{defn}

\begin{defn}\label{def22}
Suppose $\Pi_n = \{\nu_n(\lambda,dt): \lambda\in I\}$ is a sequence of 
probability measure valued stochastic processes on $({\cF}_n\times\Omega,\lambda\times P)$,
respectively,
where $\lambda$ is Haar measure on ${\cB}\supset {\cF}_n$ for all $n$.
Then we say one has finite dimensional weak convergence in distribution
 of $\Pi_n$ to $\Pi$ if for 
any finite $\lambda_1,\lambda_2,\dots,\lambda_m$ in $I$, one has 
$(\int_{\partial T}g_i(t)\Pi_n(\lambda_i,dt))_{i=1}^m\to (\int_{\partial T}g_i(t)\Pi(\lambda_i,dt))_{i=1}^m$
 in distribution for all $g_i\in C_f, 1\le i\le m.$
 \end{defn}

\section{Main Results}\label{sec:main}
With the previous section as background, let $X$ be a positive random variable with mean one and satisfying the \emph{strict strong disorder} condition $\E(X\log X)>\log 2$ for the multiplicative cascade defined in~\eqref{RNDcascade}.  By calculations of the
type given in \cite{holleyway}, it is easy to see the following (scale invariant) fact.
\begin{lem}
Assume that ${\mathbb E}X = 1$ and ${\mathbb E}X\log X > \log 2.$  
Then there is a unique  $\ga\in(0,1)$ such that
\begin{equation}\label{eq:bndryalpha}
\E\left(\frac{X^{\ga}}{\E X^{\ga}}\log \frac{X^{\ga}}{\E X^{\ga}}  \right) = \log 2.
\end{equation}
\end{lem}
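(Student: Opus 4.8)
\section*{Proof proposal}

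The plan is to reduce the claim to elementary convexity of a cumulant-type function. Set $\varphi(\ga):=\E X^{\ga}$ for $\ga\in[0,1]$; this is finite throughout, since $x^{\ga}\le 1+x$ for $x\ge 0$ and $\E X=1$, and $\varphi>0$ because $X>0$ a.s. Put $\Lambda(\ga):=\log\varphi(\ga)$. Expanding the logarithm, the left-hand side of \eqref{eq:bndryalpha} is
\[
	\psi(\ga):=\E\!\left(\frac{X^{\ga}}{\varphi(\ga)}\log\frac{X^{\ga}}{\varphi(\ga)}\right)=\frac{\ga\,\varphi'(\ga)}{\varphi(\ga)}-\log\varphi(\ga)=\ga\Lambda'(\ga)-\Lambda(\ga),
\]
so the assertion is exactly that $\psi(\ga)=\log 2$ has a unique solution in $(0,1)$.

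First I would record the analytic regularity. Differentiation under the integral sign is legitimate on every compact subinterval $[\delta,1-\delta]\subset(0,1)$: the integrands $X^{\ga}(\log X)^{k}$ are dominated there by $C_{\delta}(1+X)+C_{\delta}'$, splitting into $\{X\ge 1\}$, where $X^{\ga}(\log X)^{k}\le C\,X^{1-\delta/2}$ for $X$ large, and $\{X<1\}$, where $x^{\ga}(\log(1/x))^{k}$ is bounded. Hence $\varphi\in C^{\infty}((0,1))$, and therefore $\Lambda,\psi\in C^{\infty}((0,1))$.

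The \emph{key monotonicity step} is the identity $\psi'(\ga)=\ga\Lambda''(\ga)$, obtained by differentiating $\psi=\ga\Lambda'-\Lambda$. Now $\Lambda''(\ga)$ is the variance of $\log X$ under the tilted probability with density $X^{\ga}/\varphi(\ga)$, which is mutually absolutely continuous with $\pr$. Thus $\Lambda''(\ga)=0$ would force $X$ to be $\pr$-a.s.\ constant, hence (since $\E X=1$) $X\equiv 1$, contradicting $\E(X\log X)>\log 2>0$. Therefore $\Lambda''>0$ and $\psi'>0$ on $(0,1)$: $\psi$ is strictly increasing there.

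Finally I would compute the endpoint limits. As $\ga\to 1^{-}$: $\varphi(\ga)\to\E X=1$ by dominated convergence ($X^{\ga}\le 1+X$), so $\Lambda(\ga)\to 0$; and $\E(X^{\ga}\log X)\to\E(X\log X)$ by monotone convergence on $\{X\ge 1\}$ and dominated convergence on $\{X<1\}$ (where $x^{\ga}\log(1/x)$ is uniformly bounded for $\ga\ge\tfrac12$), so $\psi(\ga)\to\E(X\log X)>\log 2$. As $\ga\to 0^{+}$: writing $\ga\,\E(X^{\ga}\log X)=\E(X^{\ga}\log X^{\ga})$, noting $X^{\ga}\to 1$ a.s.\ and that $|X^{\ga}\log X^{\ga}|$ is dominated uniformly over $\ga\in(0,\tfrac12]$ by an integrable function (since $\sup_{0<x<1}x^{\ga}\log(1/x)=\tfrac1{e\ga}$, so $\ga x^{\ga}\log(1/x)\le 1/e$, and on $\{X\ge 1\}$ the product stays bounded by $C X^{3/4}$), dominated convergence gives $\ga\Lambda'(\ga)\to 0$ and $\Lambda(\ga)\to 0$, hence $\psi(\ga)\to 0<\log 2$. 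A continuous strictly increasing function on $(0,1)$ with limits $0$ and $\E(X\log X)>\log 2$ at the two ends takes the value $\log 2$ exactly once; that point is the desired $\ga\in(0,1)$.

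I expect the only real work to be the \emph{uniform integrable domination} behind differentiation on $(0,1)$ and behind passing the two endpoint limits through the expectation, especially in the borderline cases $\E\log X=-\infty$ or $\E(X\log X)=+\infty$. In each case one must check that the relevant products $X^{\ga}\log(1/X)$ (or their $\ga$-weighted versions) are bounded by $\ga$-independent integrable functions; all such estimates are elementary, resting on $\sup_{0<x<1}x^{\ga}(\log(1/x))^{k}<\infty$ for every $\ga>0,\ k\ge 0$, and on $x^{1-\delta}\le 1+x$.
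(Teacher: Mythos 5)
Your proof is correct and follows essentially the same route as the paper: you study the same function $\psi(\ga)=\ga\Lambda'(\ga)-\Lambda(\ga)$ (the paper writes it base $2$ as $\ga\rho(\ga)-\log_2\E X^{\ga}$), check that it tends to $0$ at $\ga=0$ and to $\E(X\log X)>\log 2$ at $\ga=1$, and conclude by monotonicity and continuity. Your identification $\psi'(\ga)=\ga\Lambda''(\ga)$ with $\Lambda''$ a tilted variance is just a spelled-out version of the paper's Cauchy--Schwarz step (with the added benefit of making the strictness, and hence uniqueness, explicit), and your domination estimates fill in the regularity details the paper leaves implicit.
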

\begin{proof}
Let $\rho(\alpha) = \mathbb{E}(\frac{X^\alpha}{\E X^\alpha}\log_2X)$.
The assertion is equivalent to the existence of a unique $\alpha\in (0,1)$ such that 
$$\alpha\rho(\alpha) - \log_2{\mathbb E}X^\alpha = 1.$$
The left side is zero at $\alpha = 0$ and, at $\alpha = 1$ it is $\rho(1) =  {\mathbb E}X\log_2 X > 1$.
Moreover, it follows from the Cauchy-Schwarz inequality that the left side is also an increasing 
function of $\alpha$.  So the assertion
 follows from these observations together with continuity  of the left hand side.
\end{proof}

Let us define
\begin{align}\label{eq:XtoW}
W:= \log 2 + \log\E(X^{\ga})-\ga\log X
\end{align}
so that $W$ satisfies $\E(e^{-W})=1/2$ and $\E(We^{-W})=0$. 




Next we construct a collection of random variables indexed by the vertices of the infinite binary tree that will appear in the joint convergence of the partition functions at different vertices  at the critical disorder, \ie $\gb=1$. This 
\emph{tree-indexed derivative martingale random
field} provides an essential ingredient of the eventual  construction.
Recall that, the derivative martingale is defined as
\[
D_n:= \sum_{|s|=n} H(s)e^{-H(s)}	
\]
and has an a.s.~positive limit $D_{\infty}$ satisfying the distributional recursion~\eqref{randomrecur}, \ie $e^{-W_{-1}}D_\infty(-1) + e^{-W_{+1}}D_\infty(+1) \equald D_{\infty}$ where $D_{\infty}(\pm1)$ are i.i.d.~$\sim D_{\infty}$.

Assume that $\{W_{v}:v\in\vT\}$ is a collection of i.i.d.~random variables each distributed as $W$ and indexed by $\vT$. Fix a positive integer $k$. For $v\in\vT, |v|=k$, let $D(v)$ be i.i.d.~copies of $D_{\infty}$. Now inductively for $i=k-1,k-2,\ldots,0$, define 
\begin{align}\label{eq:ind}
D(v):=D(v,-1)e^{-W_{v,-1}} + D(v,+1)e^{-W_{v,+1}} \text{ for } v\in\vT, |v|=i.
\end{align}
It is easy to see that for any fixed $i\le k$, $\{D(v),|v|=i\}$ are i.i.d.~copies of $D_{\infty}$ and thus $\{D(v): |v|\le k\}$ is a consistent family of distributions. By Kolmogorov's consistency theorem there exists a
(denumerable) tree-indexed collection of random vectors
\begin{align}
\vD_{\infty}:=\{(W_{v},D_{\infty}(v)): v\in\vT\}
\end{align}
such that the finite-dimensional distribution restricted to $\{v:|v|\le k\}$ is given by the above construction~\eqref{eq:ind}.

Now define the interval $I(\emptyset)=[0,D_{\infty}(\emptyset))$. One 
can think of the tree-indexed derivative martingales
$\vD_{\infty}$ as providing a way in which to partition the interval  $I(\emptyset)$ into successively smaller intervals. Define 
\begin{align*}
I(-1) &=[0,e^{-W_{-1}}D_{\infty}(-1))\\
\text{ and } I(+1) &=[e^{-W_{-1}}D_{\infty}(-1),e^{-W_{+1}}D_{\infty}(+1)+e^{-W_{-1}}D_{\infty}(-1)).
\end{align*} 
Note that $D_{\infty}(\emptyset)=e^{-W_{-1}}D_{\infty}(-1)+e^{-W_{+1}}D_{\infty}(+1)$  a.s.~by construction and thus $I(+1), I(-1)$ is a partition of $I(\emptyset)$. Now to define $I(v)$ for $v\in\vT, |v|=k$, consider the lexicographic ordering on $\{-1,+1\}^k$, \ie  for $u,v\in\{-1,+1\}^k$, $u\prec v$ iff  there exists $i\in\{0,1,\ldots,k\}$ such that $u|i=v|i$ and $u_{i+1}<v_{i+1}$. Now, for $v\in\vT, |v|=k$ define
\begin{align}
I(v) := \biggl[\ \sum_{u\prec v} e^{-H(u)}D_{\infty}(u) , e^{-H(v)}D_{\infty}(v)+ \sum_{u\prec v} e^{-H(u)}D_{\infty}(u)\biggr).
\end{align}
One can easily check that the collection of intervals $\{I(v): v\in\vT\}$ respects the tree structure in terms of set-inclusion, i.e., if $v$ is an ancestor of  $u$ then $I(u)\subseteq I(v).$ 

Here we note that, any infinite path $s\in\partial \vT$ can be represented by a point $t(s)\in I(\emptyset)$ and conversely any point $t_{0}\in I(\emptyset)$ corresponds to a unique path $s=s(t_0)\in\partial \vT$ in the sense that $\{t\}=\bigcap_{k=1}^{\infty} {I}(s|k)$.
	

Let $\theta>0$ be a fixed real number. Consider a decorated (or marked) Poisson point process $\cN$ in $\dR\times [0,\infty)$ with intensity measure $e^x dtdx, (x,t)\in \dR\times [0,\infty)$ and the decoration at the point $(x,t)$  given by $V_{x,t}$ which are i.i.d.~copies of a point process $V$. 
Let $\{(W_v,D_{\infty}(v)): v\in\vT\}$ be a collection of random variables indexed by the vertices of $\vT$ as constructed above,
and  independent of $\cN$. Fix a real number $\theta>0$. 

Now for any $\ga\in (0,1)$ and $v\in \vT$, define
\begin{align}
\cI_{\ga}(v)= 
\int_{\dR\times\theta I(v)}e^{-{z/\alpha}}\cN(dz\times dt) =
\sum_{(x,t)\in\cN}  e^{- x/\ga}\ind_{t/\theta \in I(v)} \sum_{y\in V_{x,t}}e^{-y/\ga}.\label{eq:cI}
\end{align}

With these preliminaries, the
 main result of this note may now be stated as follows. 

\begin{thm} 
	\label{mainthm}
	Assume that the distribution of $W$ is non-lattice, satisfies the boundary condition~\eqref{eq:ct} and the following size-biased moment is finite: 
	\begin{align}
		\E (W^2+\log_+(e^{-W} + We^{-W}))e^{-W} <\infty
	\end{align}
	Then, we have for any $\gb_{1},\gb_{2},\ldots,\gb_{k}\in (1,\infty)$ and $v_{1},v_{2},\ldots,v_{k}\in\vT$
\begin{align*}
\{ n^{3\gb_i/2}e^{-\gb H(v_i)}Z_n(\gb_i;v_i): i=1,2,\ldots,k\} \Rightarrow \{ \cI_{1/\gb_i}(v_i): i=1,2,\ldots,k\} 
\end{align*}
in the sense of convergence in distribution for some $\theta>0$ and some point process $V$. In particular, there are random probability measures $\text{prob}_{\infty,\gb}(ds)$ on $\partial \vT$ parameterized by $\gb > 1$ and defined on a common probability space such that 
$$
\{\text{prob}_{n,\gb}(\Delta(v)) : v\in\vT\} \Rightarrow\{\text{prob}_{\infty,\gb}(\Delta(v)) : v\in\vT\},
$$
where $\Rightarrow$ denotes finite-dimensional convergence in distribution and $$\text{prob}_{\infty,\gb}(\Delta(v)):=\cI_{1/\gb}(v)/\cI_{1/\gb}(\emptyset) 
\equiv \cI^{-1}_{1/\gb}(\emptyset)\int_{\dR\times\theta I(v)}e^{-{\beta z}}\cN(dz\times dt)
, \quad v\in\vT. 
$$
\end{thm}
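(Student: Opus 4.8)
The plan is to deduce everything from Madaule's theorem on the extremal process of a single branching random walk, together with two elementary structural facts: the subtree rooted at any vertex $v$ is itself a branching random walk with the same step law, independent of the path from the root to $v$; and the vertex partition functions at a fixed level of the tree add up to those at lower levels. Recall that Madaule's theorem, under the non-lattice hypothesis (which is where that assumption enters) and the finiteness of the size-biased moment assumed in the statement, gives convergence in distribution of the recentred extremal point process $\overline{\mathcal E}_n:=\sum_{|s|=n}\delta_{H_n(s)-\frac32\log n}$ to a limit $\overline{\mathcal E}_\infty$ which, conditionally on the derivative martingale limit $D_\infty$, is a decorated Poisson point process on $\dR$ with intensity $\theta D_\infty e^{x}\,dx$ and i.i.d.\ decoration point processes distributed as $V$, for a suitable $\theta>0$; this is exactly the pair $(\theta,V)$ appearing in the theorem.

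I would first treat a single vertex $v$ with $|v|=m$. For $n>m$ the subtree rooted at $v$ is a branching random walk with the same step law, and $Z_n(\gb;v)=\sum_{s\in\gD_n(v)}e^{-\gb(H(s)-H(v))}$ is its partition function at time $n-m$; writing $\overline{\mathcal E}^{(v)}_k:=\sum_{s\in\gD_{m+k}(v)}\delta_{H(s)-H(v)-\frac32\log k}$ one has $n^{3\gb/2}Z_n(\gb;v)=c_n\int_{\dR}e^{-\gb x}\,\overline{\mathcal E}^{(v)}_{n-m}(dx)$ with $c_n\to1$ deterministically. Madaule's theorem gives $\overline{\mathcal E}^{(v)}_k\Rightarrow\overline{\mathcal E}^{(v)}_\infty$, the decorated Poisson process directed by the subtree's derivative martingale limit $D_\infty(v)$. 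Since $x\mapsto e^{-\gb x}$ is unbounded to the left, this must be promoted to $\int e^{-\gb x}\,\overline{\mathcal E}^{(v)}_k(dx)\Rightarrow\int e^{-\gb x}\,\overline{\mathcal E}^{(v)}_\infty(dx)$ by the usual truncation scheme: for $A>0$ the truncated functional $\mathcal E\mapsto\int_{(-A,\infty)}e^{-\gb x}\,\mathcal E(dx)$ is continuous at point measures without an atom at $-A$, hence passes to the limit, and one shows the remainder $\sum_{s}e^{-\gb(H(s)-H(v)-\frac32\log k)}\ind(H(s)-H(v)-\frac32\log k\le-A)$ tends to $0$ in probability as $A\to\infty$, uniformly in $k$. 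Multiplying by the factor $e^{-\gb H(v)}$, which depends only on the $W$'s along the path to $v$ and is therefore independent of the subtree, the limit becomes the exponential functional of a decorated Poisson process of intensity $\theta e^{-H(v)}D_\infty(v)e^{x}\,dx$ with decoration law $V$. But, conditionally on $\vD_\infty$, the restriction of $\cN$ to $\dR\times\theta I(v)$ is a Poisson process whose $x$-marginal has intensity $|\theta I(v)|\,e^{x}\,dx=\theta e^{-H(v)}D_\infty(v)e^{x}\,dx$ and which carries i.i.d.\ decorations $V_{x,t}\sim V$; hence the two limits agree in law, i.e.\ $n^{3\gb/2}e^{-\gb H(v)}Z_n(\gb;v)\Rightarrow\cI_{1/\gb}(v)$.

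For the joint statement I would first take pairwise incomparable vertices $v_1,\dots,v_k$ (no one an ancestor of another), with arbitrary $\gb_1,\dots,\gb_k>1$ and repeats among the $v_i$ allowed. For each fixed vertex occurring among the $v_i$, the quantities corresponding to the different exponents attached to it converge jointly, by the previous paragraph applied to the vector-valued functional $\mathcal E\mapsto(\int e^{-\gb_j x}\mathcal E(dx))_j$ of the single point process $\overline{\mathcal E}^{(v_i)}_k$; and the blocks corresponding to distinct $v_i$ are measurable functions of the pairwise disjoint subtrees, hence independent, so the whole array converges jointly to the array of limits, independent across the distinct subtrees. The path factors $(e^{-\gb_i H(v_i)})_i$ are independent of all the subtree data and constant in $n$, so they combine into the limit. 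That the joint limit is $(\cI_{1/\gb_i}(v_i))_i$ follows because incomparability forces the intervals $\theta I(v_i)$ to be pairwise disjoint — pass to the most recent common ancestor of any two of them and use that its two child intervals partition it — so the restrictions of $\cN$ to the sets $\dR\times\theta I(v_i)$ are independent; the identification of conditional laws then rests on the fact, already built into the Kolmogorov construction of $\vD_\infty$, that it reproduces the genuine joint law of $\{(W_v,D_\infty(v)):v\in\vT\}$ — in particular the a.s.\ recursion $D_\infty(v)=e^{-W_{v,-1}}D_\infty(v,-1)+e^{-W_{v,+1}}D_\infty(v,+1)$, which holds for the real tree because the additive martingale $\sum_{s\in\gD_n(v')}e^{-(H(s)-H(v'))}$ tends to $0$ a.s.\ in the boundary case. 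For arbitrary vertices $v_1,\dots,v_k$, set $m=\max_i|v_i|$: for any $v$ with $|v|\le m$ one has $n^{3\gb/2}e^{-\gb H(v)}Z_n(\gb;v)=\sum_{w}n^{3\gb/2}e^{-\gb H(w)}Z_n(\gb;w)$, the sum over the level-$m$ descendants $w$ of $v$, a fixed finite linear combination of the incomparable level-$m$ quantities; and $\cI_{1/\gb}(v)=\sum_{w}\cI_{1/\gb}(w)$ over the same set of $w$, since $\theta I(v)=\bigsqcup_{w}\theta I(w)$. Thus the general case follows from the incomparable case by the continuous mapping theorem.

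For the final assertion, $\cI_{1/\gb}(\emptyset)$ is a.s.\ finite, being distributed as the a.s.-finite limit $\int e^{-\gb x}\overline{\mathcal E}_\infty(dx)$ obtained above, and a.s.\ positive, since $D_\infty(\emptyset)>0$ a.s.\ makes $\theta I(\emptyset)$ a non-degenerate interval and hence $\cN$ a.s.\ has points in $[0,\infty)\times\theta I(\emptyset)$, each contributing a positive amount. Setting $\text{prob}_{\infty,\gb}(\gD(v)):=\cI_{1/\gb}(v)/\cI_{1/\gb}(\emptyset)$, the one-step partition $\theta I(v)=\theta I(v,-1)\sqcup\theta I(v,+1)$ gives $\cI_{1/\gb}(v)=\cI_{1/\gb}(v,-1)+\cI_{1/\gb}(v,+1)$, so these cylinder values form an a.s.\ consistent, additive, normalized family, and by Kolmogorov's extension theorem $\text{prob}_{\infty,\gb}$ is a.s.\ a Borel probability measure on $\partial\vT$, all realized on the single probability space carrying $(\vD_\infty,\cN)$. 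Since $\text{prob}_{n,\gb}(\gD(v))=\bigl(n^{3\gb/2}e^{-\gb H(v)}Z_n(\gb;v)\bigr)/\bigl(n^{3\gb/2}Z_n(\gb)\bigr)$, applying the first part of the theorem jointly to the finitely many vertices together with the root, all at the common parameter $\gb$, and composing with the map that divides the numerators by the denominator — continuous on the full-measure set where the denominator is positive — yields the finite-dimensional convergence in distribution of $\{\text{prob}_{n,\gb}(\gD(v)):v\in\vT\}$ to $\{\text{prob}_{\infty,\gb}(\gD(v)):v\in\vT\}$; the convergence of $\int g\,d\text{prob}_{n,\gb}$ for every $g\in C_{\mathrm{fin}}$ follows at once, since such $g$ is a finite linear combination of cylinder indicators $\ind_{\gD(v)}$. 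The one genuinely analytic step, and the main obstacle, is the uniform-in-$k$ left-tail estimate of the second paragraph: because the first moment $\E[n^{3\gb/2}Z_n(\gb)]$ diverges in this glassy regime, naive moment truncation fails, and one must invoke the barrier (Bramson-type) estimates for the branching random walk — and it is precisely there that $\gb>1$ and the finite size-biased second moment of $W$ are used.
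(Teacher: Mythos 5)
Your proposal is correct and follows essentially the same route as the paper: Madaule's decorated-Poisson limit for the extremal process, independence of the subtrees at a fixed level, representation of the limit array by restricting a Poisson process with intensity $e^{x}\,dx\,dt$ to the disjoint intervals $\theta I(v)$ built from the tree-indexed derivative martingale field, additivity over levels of the tree, and continuous mapping for the normalized ratios. The one analytic step you defer to barrier estimates --- upgrading the point-process convergence to convergence of the unbounded Laplace functionals $\int e^{-\beta x}N_n(dx)$ via a uniform left-tail bound --- is precisely what the paper outsources by citing Madaule's Theorem 2.4, so no idea is missing, and your write-up in fact fills in details (level-$m$ additivity, positivity of the denominator, Kolmogorov extension of the limit measure) that the paper dismisses as following easily.
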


\begin{rem}
The physics of random distributions of the type obtained here can be 
phrased in terms of  \emph{metastates} as defined in~\cite{aizwehr}. In fact, we prove finite-dimensional  convergence of the \emph{joint distribution} of the
$\text{prob}_{n,\beta}$'s and the disorder, i.e., the $W_v$'s.
This defines a metastate in the Aizenman-Wehr
sense~\cite{aizwehr}. 
Related notions occur in the mathematical physics
 literature~\cites{newman_stein, bovier}. For example,
a metastate in the sense of Newman-Stein requires that one
condition on the disorder first, and then obtain the limit of an empirical distribution of the $\prob_{n_k}$'s along some sparse (but deterministic)
 subsequences $n_k$.
 The more purely probabilistic content follows the perspective of Aldous'
\cite{aldous} \emph{objective approach} in which one may view the 
construction of the random objects $\prob_{\infty,\gb}$ as natural
stochastic structures associated with the sequence $\prob_{n,\gb},
n\ge 1$ via a weak convergence in distribution; e.g. see 
Corollary \ref{gencor} below.
\end{rem}

\pd{
Here we mention that in the strong disorder regime, \ie $\gb>1$,  the measures $\mu_{n,\beta}$ do not have a non-trivial limit. However, the $\gs$-finite measure $n^{-3\gb/2}\cdot\mu_{n,\gb}$ has the weak limit $\mu_{\infty,\gb}:=\cI_{1/\gb}(\emptyset)\ \prob_{\infty,\gb}$ over the collection of sets $\gD(v),v\in \vT$ and $\mu_{\infty,\gb}(\gD(v))$ can be written as a scale mixture of $1/\gb$-stable random variables. 
}

As a consequence of the explicit construction 
we can see that 
 the limiting measures $(\prob_{\infty,\gb}, \gb>1)$ are defined on the same probability space and are mutually absolutely continuous on $\partial T$ $\Omega$-a.s.  
By the definition of the intervals $(I(v))_{v\in \vT}$, any infinite path $s\in \{-1,+1\}^\infty$ in the binary tree will be represented by a point $t(s)$ in the interval $I(\emptyset)$.  More specifically, with this notation, one has the following immediate consequence.

\begin{cor}
$(\text{prob}_{\infty,\gb}, \gb>1)$ are defined on the same probability space and are mutually absolutely continuous  with  the Radon-Nikodym derivative of $\text{prob}_{\infty,\gb_1}$ with respect to ~$\text{prob}_{\infty,\gb_2}$ at the infinite path $s$ (with corresponding time 
point $t(s)$) given by
\[
\frac{d\prob_{\infty,\gb_1}}{d\prob_{\infty,\gb_2}}(s) =
\frac{C_{\beta_1}(t(s))  \cI_{1/\beta_2}(\emptyset)} {C_{\beta_2}(t(s)) \cI_{1/\beta_1}(\emptyset)}.
\]
where  the $\beta$-contribution for a single point $t_0$ is given by
\[
C_\beta(t_0) := \sum_{(x,t)\in\cN: t=t_0} e^{-\beta x} \sum_{y\in V_{x,t}} e^{-\beta y}.
\]
\pd{Moreover, the sample paths of the (probability) measure-valued process $\gb\mapsto \prob_{\infty,\beta}$ are a.s. continuous for the total variation norm and, hence, weak-* topology, \ie as $\gb_{n}\to \gb>1$ one has $\prob_{\infty,\gb_{n}}$ converges weakly to $\prob_{\infty,\gb}$. }
\end{cor}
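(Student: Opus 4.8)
The plan is to read off all three assertions from the explicit description of $\prob_{\infty,\gb}$ supplied by Theorem~\ref{mainthm}. Since every $\prob_{\infty,\gb}$, $\gb>1$, is a deterministic (measurable) functional of the single pair $(\cN,\vD_\infty)$, the measures are automatically defined on a common probability space, so only the mutual absolute continuity and the path regularity need argument. The first step is to show that each $\prob_{\infty,\gb}$ is purely atomic and to locate its atoms. Since distinct points $(x,t)$ of the underlying (pre-decoration) Poisson process a.s.\ have distinct time coordinates, the definition~\eqref{eq:cI} becomes $\cI_{1/\gb}(v)=\sum_{(x,t)\in\cN:\,t/\theta\in I(v)}C_\gb(t)$ for every $v\in\vT$, with $C_\gb$ as in the statement. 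Combining the nesting $I(u)\subseteq I(v)$, valid whenever $u$ descends from $v$, with the identity $\bigcap_k I(s|k)=\{t(s)\}$ built into the construction of the intervals, continuity from above of $\prob_{\infty,\gb}$ (using $\prob_{\infty,\gb}(\gD(v))=\cI_{1/\gb}(v)/\cI_{1/\gb}(\emptyset)$) gives $\prob_{\infty,\gb}(\{s\})=\lim_k\prob_{\infty,\gb}(\gD(s|k))=C_\gb(t(s))/\cI_{1/\gb}(\emptyset)$, where $t(s)$ is the time coordinate of the Poisson point representing $s$, this number being $0$ unless such a point exists. Hence $\prob_{\infty,\gb}=\sum_{s\in S_0}\bigl(C_\gb(t(s))/\cI_{1/\gb}(\emptyset)\bigr)\delta_s$, where $S_0$ is the countable set of boundary paths arising from Poisson points $(x,t)$ with $t/\theta\in I(\emptyset)$ and nonempty decoration $V_{x,t}$. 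The key point is that $S_0$ does not depend on $\gb$: for a fixed such point, $C_\gb(t(s))=e^{-\gb x}\sum_{y\in V_{x,t}}e^{-\gb y}>0$ for every $\gb>1$ simultaneously. Consequently $\prob_{\infty,\gb_1}$ and $\prob_{\infty,\gb_2}$ are carried by the same set $S_0$, each is equivalent to counting measure on $S_0$, and hence they are mutually absolutely continuous with $\frac{d\prob_{\infty,\gb_1}}{d\prob_{\infty,\gb_2}}(s)$ equal to the ratio of the two atom masses, which is precisely the displayed formula.

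For the path regularity, fix $\gb>1$ and let $\gb_n\to\gb$. Since $\prob_{\infty,\gb_n}$ and $\prob_{\infty,\gb}$ are probability measures carried by the same countable set $S_0$, one has $\|\prob_{\infty,\gb_n}-\prob_{\infty,\gb}\|_{\mathrm{TV}}=\frac12\sum_{s\in S_0}|\prob_{\infty,\gb_n}(\{s\})-\prob_{\infty,\gb}(\{s\})|$, so by Scheff\'e's lemma (with counting measure on $S_0$) it is enough to prove pointwise convergence of the atom masses along $S_0$. For the atom coming from a point $(x,t)$, the function $\gb\mapsto C_\gb(t(s))=\sum_{y\in V_{x,t}}e^{-\gb(x+y)}$ is a sum of convex functions, hence convex; it is finite at each fixed $\gb>1$ by Theorem~\ref{mainthm}, hence a.s.\ finite on a countable dense subset of $(1,\infty)$, hence a.s.\ finite and continuous on all of $(1,\infty)$ by convexity. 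The same applies to $\gb\mapsto\cI_{1/\gb}(\emptyset)=\sum_{s\in S_0}C_\gb(t(s))$, which is in addition a.s.\ strictly positive on $(1,\infty)$. Dividing, $\gb\mapsto\prob_{\infty,\gb}(\{s\})$ is continuous on $(1,\infty)$ for every $s\in S_0$, so $\prob_{\infty,\gb_n}(\{s\})\to\prob_{\infty,\gb}(\{s\})$ and Scheff\'e's lemma yields $\|\prob_{\infty,\gb_n}-\prob_{\infty,\gb}\|_{\mathrm{TV}}\to 0$ almost surely. As $\gb$ and $(\gb_n)$ were arbitrary, the sample paths $\gb\mapsto\prob_{\infty,\gb}$ are a.s.\ continuous for the total-variation norm, and total-variation convergence implies weak-$*$ convergence.

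The step I expect to be the main obstacle is packaging the $\gb$-indexed almost-sure statements into one good event: one needs $\gb\mapsto\cI_{1/\gb}(\emptyset)$, and each $\gb\mapsto C_\gb(t(s))$, to be finite and continuous on the \emph{whole} ray $(1,\infty)$ off a single null set. Convexity in $\gb$ reduces this to a.s.\ finiteness at each individual $\gb>1$, but that finiteness is itself the quantitative heart of the construction underlying Theorem~\ref{mainthm} and of Madaule's tail estimates --- the Poisson intensity $e^x\,dx$ together with $\gb>1$ makes $\int e^{(1-\gb)x}\,dx$ integrable near $+\infty$, while the decoration contributes a factor of finite mean under the stated size-biased moment hypothesis on $W$. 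A secondary point requiring care is the rigorous identification of the atoms, i.e.\ that $\bigcap_k I(s|k)$ is a genuine singleton for the relevant paths and that distinct Poisson time coordinates yield distinct boundary paths, so that $\prob_{\infty,\gb}$ carries no diffuse component; this is implicit in the interval construction but should be made explicit.
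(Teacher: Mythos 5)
Your proposal is correct and follows essentially the same route as the paper's own (much terser) proof: both identify $\prob_{\infty,\gb}$ as purely atomic with atoms indexed by the $\beta$-independent time coordinates of the Poisson points, read off the Radon--Nikodym derivative as the ratio of atom masses $C_\gb(t(s))/\cI_{1/\gb}(\emptyset)$, and obtain total-variation continuity from Scheff\'e's theorem together with continuity in $\gb$ of the Laplace-transform sums $C_\gb(t_0)$ and $\cI_{1/\gb}(\emptyset)$. You simply supply the details (identification of atoms via the nested intervals, and convexity to get a.s.\ finiteness and continuity on all of $(1,\infty)$) that the paper leaves implicit.
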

\begin{proof}
Observe that  the Poisson process is independent of the intervals.
 The $\beta$-contribution for a single point $t_0$
 is nonzero for countably infinitely many $t$'s and the support set for $t_{0}$, projection of $\cN$ on the second co-ordinate, is independent of $\beta$. Continuity of the process $\gb\mapsto \prob_{\infty,\beta}$ in the total variation norm follows from the absolute continuity using Scheffe's theorem, and continuity of the respective Laplace transforms appearing in
the Radon-Nikodym derivatives. This implies the asserted 
continuity in the weak-* topology.
\end{proof}
\pd{
In fact, the random mapping $\beta\mapsto\prob_{n,\beta}$, can be considered as a random process defined on $(1,\infty)$ and taking values in $\mathcal{M}(\{0,1\}^\infty)$, the space of probability measures on $\{0,1\}^\infty$ endowed with the weak* topology. 
The main result of this paper states that this sequence of processes converges weakly to a limiting process $\beta\mapsto\prob_{\infty,\beta}$, in the finite-dimensional sense (see Definition~\ref{def21} and \ref{def22}). }


 It may be noted that similar formulae are known for other models of disorder, such as the \emph{random energy model} (REM), and \emph{generalized random energy model} (GREM), introduced by~\cites{derrida85, ruelle87, neveu92} and related by mean-field type formulations.  It was shown by \cite{bertoinlegall00} as a consequence of~\cite{neveu92} that the genealogy of the GREM is given by the Bolthausen-Sznitman coalescent.  It is interesting to note the manner in which the asymptotic results for the multiplicative cascade model \emph{differ} from those of GREM, yet
 remain within the general framework of  $\Lambda$-coalescence (for non-uniform
$\Lambda$.)  This is elaborated upon
with related comments are included at the close of this note. 

Another specific  by-product of Theorem~\ref{mainthm} is that
one can easily  find the limiting distribution of the genealogical tree of randomly chosen $k$ vertices in $\{-1,+1\}^n$ from the distribution $\text{prob}_{n,\gb}$. Recall that for $v=(v_1,v_2,\ldots,v_n)\in\vT$ and an integer $k\le n$, we have $v\mid k=(v_1,v_2,\ldots,v_k)$. 
Consider the decorated Poisson process as given in equation~\ref{eq:cI}. Let $\nu$ be the (random) probability measure supported on $I(\emptyset)$ so that 
\[
\nu'_\gb(t_0) = \frac{1}{ \cI_{1/\gb}(\emptyset) }\sum_{(x,t)\in\cN: t=t_0} e^{-\beta x} \sum_{y\in V_{x,t}} e^{-\beta y}.
\]
Let $\nu_{\gb}$ be the probability measure on $\partial\vT$ such that $t(s)\sim \nu'_\gb$ when $s\sim\nu_{\gb}$. 
The following corollary follows easily from Theorem~\ref{mainthm}. 

\begin{cor}
\label{gencor}
Let $\vv_{1},\vv_{2},\ldots,\vv_{k}$ be $k$ many i.i.d.~vertices from the probability measure $\text{prob}_{n,\gb}$ on $\{-1,+1\}^{n}$. Let $\vu_{1},\vu_{2},\ldots,\vu_{k}$ be $k$ many i.i.d.~vertices from the probability measure $\nu_{\gb}$. Then for any fixed integer $k$ we have
\[
(\vv_{1}\mid k,\vv_{2}\mid k,\ldots,\vv_{k}\mid k)\weakc (\vu_{1}\mid k,\vu_{2}\mid k,\ldots,\vu_{k}\mid k)
\]
as $n\to\infty$.
\end{cor}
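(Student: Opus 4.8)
The plan is to reduce the claim entirely to the finite-dimensional convergence already supplied by Theorem~\ref{mainthm}, using the elementary fact that the law of the near-root genealogy of $k$ sampled leaves is an \emph{annealed} average of products of cylinder masses.

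Fix the level $k$ and a target configuration $(w_1,\dots,w_k)\in(\{-1,+1\}^k)^k$. Conditionally on the disorder $\{W_v:v\in\vT\}$, the vertices $\vv_1,\dots,\vv_k$ are i.i.d.\ samples from $\text{prob}_{n,\gb}$ on $\{-1,+1\}^n$, so $\pr(\vv_i\mid k=w_i\mid\{W_v\})=\text{prob}_{n,\gb}(\gD(w_i))$ --- here one uses $e^{-\gb H(w_i)}Z_n(\gb;w_i)=\sum_{s\in\gD_n(w_i)}e^{-\gb H(s)}$ from~\eqref{eq:vertexpf}, so the cylinder mass coincides with the mass of the depth-$n$ descendants of $w_i$. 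By conditional independence of the $\vv_i$, and then averaging over the disorder,
\[
\pr\big((\vv_i\mid k)_{i=1}^k=(w_i)_{i=1}^k\big)=\E\Big[\prod_{i=1}^k\text{prob}_{n,\gb}(\gD(w_i))\Big].
\]
The identical computation for $\vu_1,\dots,\vu_k$, conditioning now on $(\cN,\vD_\infty)$, gives the same formula with $\nu_\gb$ in place of $\text{prob}_{n,\gb}$; and by the construction of $\nu_\gb$ from $\cN$, together with~\eqref{eq:cI} and the formula $\text{prob}_{\infty,\gb}(\gD(v))=\cI_{1/\gb}(v)/\cI_{1/\gb}(\emptyset)$ from Theorem~\ref{mainthm}, one has $\nu_\gb(\gD(v))=\text{prob}_{\infty,\gb}(\gD(v))$, so
\[
\pr\big((\vu_i\mid k)_{i=1}^k=(w_i)_{i=1}^k\big)=\E\Big[\prod_{i=1}^k\text{prob}_{\infty,\gb}(\gD(w_i))\Big].
\]

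Next I would invoke Theorem~\ref{mainthm} restricted to the finite set of depth-$k$ vertices: the random vector $(\text{prob}_{n,\gb}(\gD(v)))_{|v|=k}$ converges in distribution, as $n\to\infty$, to $(\text{prob}_{\infty,\gb}(\gD(v)))_{|v|=k}$ in the cube $[0,1]^{\{v:|v|=k\}}$. Since $(x_v)_{|v|=k}\mapsto\prod_{i=1}^k x_{w_i}$ is continuous and bounded by $1$ on this cube, weak convergence gives $\E[\prod_{i=1}^k\text{prob}_{n,\gb}(\gD(w_i))]\to\E[\prod_{i=1}^k\text{prob}_{\infty,\gb}(\gD(w_i))]$, i.e.\ $\pr((\vv_i\mid k)_i=(w_i)_i)\to\pr((\vu_i\mid k)_i=(w_i)_i)$ for every fixed configuration. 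Because $(\{-1,+1\}^k)^k$ is finite, pointwise convergence of the probability mass functions is exactly the asserted convergence in distribution.

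The argument is essentially soft and I do not anticipate a real obstacle. The two points that need a little care are the quenched/annealed bookkeeping --- the sampled vertices are i.i.d.\ only after conditioning on the environment, so the joint law of their truncations is an average of a product of cylinder masses rather than a product of averages --- and the verification that $\nu_\gb$, as built from the decorated Poisson process, genuinely assigns mass $\text{prob}_{\infty,\gb}(\gD(v))$ to each cylinder $\gD(v)$ (which in turn uses that the Poisson atoms a.s.\ avoid the endpoints of the intervals $I(v)$, so that each atom sits in a well-defined cylinder at every level). Since all cylinder masses lie in $[0,1]$, no uniform-integrability issue arises and the continuous-mapping step is immediate.
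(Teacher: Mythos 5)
Your proof is correct and follows exactly the route the paper intends: the paper gives no separate argument for Corollary~\ref{gencor} beyond asserting that it ``follows easily from Theorem~\ref{mainthm},'' and your annealed product-of-cylinder-masses computation combined with the finite-dimensional convergence of $(\text{prob}_{n,\gb}(\gD(v)))_{|v|=k}$ and the bounded continuous mapping $(x_v)\mapsto\prod_i x_{w_i}$ is precisely that easy deduction. Your two flagged care points (quenched versus annealed bookkeeping, and the identification $\nu_\gb(\gD(v))=\text{prob}_{\infty,\gb}(\gD(v))$ via the a.s.\ avoidance of interval endpoints by the Poisson atoms) are the right ones and are handled adequately.
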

This implies local convergence of the genealogical tree for randomly chosen $k$ vertices from $\text{prob}_{n,\gb}$ near the root.

Finally let us record that a companion
formulation of weak convergence in distribution can be given in terms of 
Fourier transforms as follows.
\begin{cor}
At any strong disorder $\gb>1$, for each finite set $F\subseteq\mathbb{N}$
\begin{equation*}
\widehat{\text{\emph{prob}}}_{n,\gb}(F) \Rightarrow \widehat{\text{\emph{prob}}}_{\infty,\gb}(F)  \text{ in distribution},
\end{equation*}
where $\widehat{\text{\emph{prob}}}_{n,\gb}, n\ge 1,
\widehat{\text{\emph{prob}}}_{\infty,\gb}$ denote their respective Fourier
transforms as 
probabilities on the compact abelian multiplicative
group $\partial T$ for the product topology.
\end{cor}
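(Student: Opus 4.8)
The plan is to deduce the corollary directly from Theorem~\ref{mainthm} after recording the elementary harmonic analysis of $\partial T$. First I would pin down the group structure: with $b=2$ we identify $\partial T=\{-1,+1\}^{\mathbb N}$ under coordinatewise multiplication, a compact abelian group isomorphic to $(\mathbb Z/2\mathbb Z)^{\mathbb N}$. Its Pontryagin dual is the restricted direct sum $\bigoplus_{i\in\mathbb N}(\mathbb Z/2\mathbb Z)$, in canonical bijection with the finite subsets $F\subseteq\mathbb N$ through the characters $\chi_F(s)=\prod_{i\in F}s_i$ (with $\chi_{\emptyset}\equiv 1$). Because each $\chi_F$ is $\{-1,+1\}$-valued, the Fourier transform of a probability $\nu$ on $\partial T$ at $F$ is just $\widehat\nu(F)=\int_{\partial T}\chi_F(s)\,\nu(ds)\in[-1,1]$; in particular $\widehat{\prob}_{n,\gb}(F)$ and $\widehat{\prob}_{\infty,\gb}(F)$ are bounded real random variables.

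Next I would observe that for each finite $F$ the character $\chi_F$ is bounded, continuous, and depends only on finitely many coordinates, so $\chi_F\in C_{\mbox{\tiny fin}}$. Writing $N=\max F$ (and $N=0$ if $F=\emptyset$), $\chi_F$ is constant on each cylinder $\gD(v)$ with $|v|=N$, hence $\widehat{\prob}_{n,\gb}(F)=\sum_{|v|=N}\chi_F(v)\,\prob_{n,\gb}(\gD(v))$ is a fixed linear --- in particular continuous --- function of the finite random vector $(\prob_{n,\gb}(\gD(v)))_{|v|=N}$, and the same identity holds for $\widehat{\prob}_{\infty,\gb}(F)$. Theorem~\ref{mainthm} gives $(\prob_{n,\gb}(\gD(v)))_{|v|=N}\Rightarrow(\prob_{\infty,\gb}(\gD(v)))_{|v|=N}$ in $\mathbb R^{2^N}$, so the continuous mapping theorem yields $\widehat{\prob}_{n,\gb}(F)\Rightarrow\widehat{\prob}_{\infty,\gb}(F)$, which is the claim. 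The same reasoning applied to the vector $(\widehat{\prob}_{n,\gb}(F_1),\dots,\widehat{\prob}_{n,\gb}(F_m))$ --- each entry a continuous function of the cylinder masses at the common level $\max_j\max F_j$ --- gives joint convergence over any finite family of characters, and combining over finitely many $\gb$'s is equally routine given the joint-in-$\gb$ content of Theorem~\ref{mainthm}; only the single-$F$, single-$\gb$ statement is actually needed.

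I do not expect a genuine obstacle here: once Theorem~\ref{mainthm} is available the argument is pure bookkeeping. The single point that deserves a remark is the reduction from ``finite-dimensional convergence of the cylinder masses $\prob_{n,\gb}(\gD(v))$'', as literally phrased in Theorem~\ref{mainthm}, to ``convergence in distribution of $\int g\,d\prob_{n,\gb}$ for $g\in C_{\mbox{\tiny fin}}$'', which is exactly the continuous-mapping step above and is valid because every $g\in C_{\mbox{\tiny fin}}$ is a finite linear combination of cylinder indicators $\ind_{\gD(v)}$ at a single level $|v|=N$. Equivalently, one may invoke Definitions~\ref{def21} and~\ref{def22} after noting $\{\chi_F:F\subseteq\mathbb N\text{ finite}\}\subset C_{\mbox{\tiny fin}}$.
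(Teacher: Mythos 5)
Your proposal is correct and follows essentially the same route as the paper: both reduce $\widehat{\prob}_{n,\gb}(F)$ to a finite linear combination of the cylinder masses $\prob_{n,\gb}(\gD(v))$ at level $m=\max F$ and then invoke the finite-dimensional convergence of Theorem~\ref{mainthm}. The only cosmetic difference is that the paper writes the cylinder masses explicitly as ratios $e^{-\gb H(v)}\,n^{3\gb/2}Z_n(\gb;v)/\bigl(n^{3\gb/2}Z_n(\gb;\emptyset)\bigr)$ and appeals to the convergence of the rescaled partition functions, whereas you use the packaged statement about $\prob_{n,\gb}(\Delta(v))$ together with the continuous mapping theorem.
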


\begin{proof}
The continuous characters of the group $\partial T$ are given by
$\chi_F(t) = \prod_{j\in F}t_j $ for finite sets $F\subseteq\mathbb{N}$. In particular there are only countably many characters of $\partial T$.  From standard Fourier analysis it follows that we need only show that
\begin{align*}
\lim_{n\rightarrow\infty}\E_{\text{prob}_n}\chi_F 
= \E_{\text{prob}_\infty}\chi_F\text{ in distribution}
\end{align*}
for each finite set $F\subseteq\mathbb{N}$.  Let $m=\text{max}\{k:k\in F\}$.  Then for $n>m$,
\begin{align*}
\E_{\text{prob}_{n,\gb}}\chi_F 
&= \int_{\partial \vT}\chi_F(s)\frac{d\text{prob}_n}{d\gl}(s)\gl(ds)\\
&= \sum_{|v|=m} \prod_{j\in F}v_j\cdot Z_n(\gb;\emptyset)^{-1} e^{-\gb H(v)} Z_n(\gb;v)\\
&= \sum_{|v|=m} \prod_{j\in F}v_j\cdot e^{-\gb H(v)} \cdot \frac{n^{3\gb/2}Z_n(\gb;v)}{n^{3\gb/2}Z_n(\gb;\emptyset)}
\Rightarrow \E_{\text{prob}_{\infty,\gb}}\chi_F
\end{align*}
where the convergence is in distribution.
\end{proof}

\section{Proof of Main Result}\label{sec:proof}
In recent years there has been a rapidly growing literature on the asymptotics of the extremes of branching random walks.  Relatively long, technical papers have provided a  refined understanding of the behavior of the right (or left) most particles in branching random walks;  e.g., ~\cites{aidekon,berestyckietal,hu_shi,madaule,brunetderrida}. 
This theory will be exploited to provide a coupled relation
between  the asymptotic distributions  of the  partition functions, suitably scaled,  for a general class of multiplicative cascades under strong disorder and non-lattice energy distributions, as a function of the disorder parameter.  In particular, two essential structures underlying the results here are:

(a) Biggins-Kyprianou's version of the \emph{derivative martingale};
see \cite{brunetderrida} and \cite{bigkyp04}, respectively, where these ideas arise in connection with the extremes of branching random walks, and
   
(b) Brunet-Derrida's notion of \emph{superposability}.

The role of the derivative martingale was previously explained above.
As noted, the construction of the tree-indexed derivative field
 is an essential element of the a.s. construction of the weak limits in distribution of the normalized cascade probabilities.  Another is that of 
\emph{superposability}
of extremal point processes introduced
by \cite{brunetderrida}, 
together with the (conjectured) corresponding representation as a 
decorated Poisson (cluster)  process,  rigorously
established by \cite{madaule}. 
		
	

 Specifically,
\begin{definition}
	A point process $N$ on $\dR $ is said to be superposable if, for an independent copy $N^\prime$ and any $a,b\in \dR$ such that $e^{-a} + e^{-b} = 1$,
	\begin{align*}
		T_aN + T_bN^\prime \equald N, 
	\end{align*}
	where $T_x\bigl(\sum_y\delta_y\bigr) = \sum_y\delta_{y + x},\ x\in\dR.$
\end{definition}

The basic example of a superposable point process is the Poisson process on $\dR $  with intensity $e^xdx$.  This is the well-known point process of extremes of a centered and scaled i.i.d.~Gaussian sequence.
More generally, a superposable point process is infinitely divisible and, therefore, it follows that it must be a Poisson cluster point process.  Based on analogous results for branching Brownian motion,  it had been conjectured in~\cite{brunetderrida} that the only superposable point processes were Poisson cluster processes with Poisson intensity $\theta e^xdx, \theta>0$. This was recently proven as a consequence of infinitely divisibility, and also as a consequence of LePage representation theory,
see~\cite{abk13,maillard13}.  In the context of the present note  it may also be interesting to note that Poisson cluster processes must be \emph{associated} in the sense of positive dependence (or FKG inequalities);~\cites{burtonway86,evans}.

Another conjecture by \cite{brunetderrida} was recently proven in \cite{madaule} extending the above quoted result for i.i.d.~Gaussian exremes to the extremes of the energies $H_n(s), |s|=n,$ centered and scaled.  In particular,  it is shown that in the boundary case the point process of extremes  is superposable.  More specifically, in the notation of the present article,

\begin{thm}[Theorem~1.1 in~\cite{madaule}] \label{thm:madaule} 
	Assume that the distribution of $W$ satisfies the condition of Theorem~\ref{mainthm}. Let $N_n = \sum_{|s| = n}\delta_{H(s) - {\frac32}\log n + \log D_\infty}$. 
Then $(N_n,D_n)$ converge jointly in distribution to $(N_\infty, D_\infty)$ where $N_\infty = \sum_{k\ge 1}\sum_{y\in V_k}\delta_{x_k + y}$ is a Poisson cluster  point process on $\dR$ with Poisson center process $\{x_k: k\ge 1\}$ having intensity $\theta e^{x}dx, x\in \dR $ for some $\theta >0$, $V_k$'s are i.i.d.~copies of  some point process $V$ and $D_{\infty}$ is independent of $N_{\infty}$.
\end{thm}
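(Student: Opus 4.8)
The plan is to reduce the joint convergence to a statement about mixed Laplace functionals, to obtain tightness of the extremal configuration from a truncated first-moment bound, and then to identify the limit through a branching self-similarity decomposition combined with a spinal change of measure and a truncated second-moment computation.

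First I would record that convergence in distribution of $(N_n, D_n)$ --- on the product of $[0,\infty)$ with the space of point measures on $\dR$ that are finite on every half-line $(-\infty, a]$, equipped with the topology generated by the nonnegative continuous test functions having support bounded above --- is, by a standard density argument, equivalent to the convergence of $\E\bigl[\exp\bigl(-\sum_{|s|=n}\phi(H(s) - \tfrac{3}{2}\log n + \log D_\infty)\bigr)\,g(D_n)\bigr]$ for all such $\phi$ and all bounded continuous $g$. Since $D_n \to D_\infty$ almost surely with $D_\infty$ a.s.\ finite and strictly positive (Biggins--Kyprianou, \cite{bigkyp04}), the random recentering by $\log D_\infty$ causes no difficulty once the shifted extremal process is shown to be tight --- that is, once one knows the number of branching-random-walk particles within a bounded distance of the minimum $\approx \tfrac{3}{2}\log n$ is tight. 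Here a subtlety enters: the raw first moment of that number grows without bound in $n$ (rare events produce atypically large families near the frontier), so one must pass to a \emph{truncated} count, retaining only particles whose ancestral trajectory $(H(s|j))_{j\le n}$ stays inside a suitable two-sided envelope. For the truncated count the many-to-one lemma turns the expected number of particles below $\tfrac{3}{2}\log n - A$ into the expectation of $e^{S_n}$ over a random walk $S$ whose increments have the centered law $2e^{-w}\,\pr(W\in dw)$ (centered because $\E We^{-W}=0$), restricted to the envelope; on that event $e^{S_n} \le n^{3/2}e^{-A}$ while the ballot-type probability of staying inside the envelope and ending near the frontier is $O(n^{-3/2})$, so the truncated first moment is $O(e^{-A})$ uniformly in $n$. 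That discarding the non-enveloped particles changes the extremal process by a negligible amount is itself one of the delicate steps and is again handled by these ballot estimates and a union bound over the discarded trajectories.

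To identify the limit I would fix a level $m$ and decompose $N_n$ according to the generation-$m$ ancestor of each particle: by the branching property $N_n$ is, conditionally on $\{H(u) : |u|=m\}$, the superposition over $|u|=m$ of independent copies of the extremal process at depth $n-m$, each shifted by $H(u)$. Only the $u$'s whose displacement is within $O(1)$ of the frontier contribute atoms in a bounded window, and a spinal change of measure (size-biasing of the subtree, under which the spinal displacement behaves like a random walk conditioned to stay positive) shows that the positions of these leading ancestors, suitably recentered, converge --- first as $n\to\infty$, then as $m\to\infty$ --- to a Poisson process on $\dR$ with intensity $\theta e^x\,dx$, the constant $\theta>0$ being left implicit and coming out of the tail asymptotics of the minimum. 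The cluster (decoration) law $V$ is identified as the limit, as $\ell\to\infty$, of the extremal point process of a subtree of depth $\ell$ viewed from its leftmost particle conditioned to lie far below the typical minimum; this limit exists by a renewal argument for the random walk conditioned to stay positive that describes the spine near its tip, and distinct leading families become asymptotically independent, which yields the i.i.d.\ decorations. The resulting limit is a superposable point process, hence --- by the characterization of superposable point processes as decorated Poisson processes, \cite{abk13,maillard13} --- a $\mathrm{DPPP}(\theta e^x\,dx, V)$.

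The main obstacle is upgrading this first-moment heuristic to a genuine distributional limit, i.e.\ proving that the superposition converges to a \emph{Poisson} cluster process with no residual correlation; this is the role of a truncated second-moment computation. One shows that $\E[(\text{truncated count in a window})^2]$ is asymptotically the square of the mean plus a diagonal term that produces the decoration, the cross term --- the expected number of pairs of frontier particles whose lineages split at an intermediate generation --- being controlled by two-point ballot estimates, for which the hypothesis $\E[(W^2 + \log_+(e^{-W}+We^{-W}))e^{-W}] < \infty$ supplies exactly the integrability needed on the reweighted increments. Finally, the independence of $D_\infty$ and $N_\infty$ comes out of the same decomposition: writing $p_u = e^{-H(u)}D_\infty(u)/D_\infty$, so that $\sum_{|u|=m}p_u = 1$, the recentered $N_n$ converges to $\sum_{|u|=m} T_{-\log p_u}N_\infty^{(u)}$ with the $N_\infty^{(u)}$ i.i.d.\ copies of $N_\infty$ built from the subtrees at the $u$'s; letting $m\to\infty$ so that $\max_u p_u\to 0$, and using that each $N_\infty^{(u)}$ is independent of the corresponding $D_\infty(u)$, one separates $N_\infty$ from $D_\infty$ --- the subtlety that this self-similar identity is designed to resolve.
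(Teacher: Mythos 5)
This statement is not proved in the paper at all: it is imported verbatim as Theorem~1.1 of Madaule's paper (the bracketed attribution in the theorem heading), and the paper's ``proof'' of it consists of the citation. So there is no internal argument to compare yours against; what you have written is a blind reconstruction of Madaule's own proof strategy. As a sketch of that external result it is broadly faithful to the route taken in the literature (A\"id\'ekon--Madaule style): reduction to Laplace functionals on the space of point measures finite on half-lines $(-\infty,a]$, tightness via truncated first moments with the many-to-one lemma and ballot estimates for the walk with tilted step law $2e^{-w}\pr(W\in dw)$, a level-$m$ branching decomposition plus spinal change of measure to produce the Poisson centers with intensity $\theta e^x dx$, identification of the decoration law $V$ from the subtree seen from an atypically low particle, a truncated second-moment/two-point estimate to kill residual correlations, and the self-similar identity $\sum_{|u|=m}T_{-\log p_u}N_\infty^{(u)}$ with $p_u=e^{-H(u)}D_\infty(u)/D_\infty$ to extract independence of $N_\infty$ from $D_\infty$ (note that for this last step conditioning at a single fixed $m$ already suffices, since the conditional law of the superposition given the $p_u$'s does not depend on them). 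Be aware, however, that each of these bullet points compresses a substantial technical argument --- in particular the joint convergence with $D_n$, the uniformity needed to interchange the $n\to\infty$ and $m\to\infty$ limits, and the existence of the cluster law --- and a complete proof at this level of detail is a long paper in itself; within the present paper the correct and intended move is simply to invoke Madaule's theorem, as the authors do, and spend the work on the consequences (the tree-indexed derivative martingale field and the construction of $\prob_{\infty,\beta}$), which is where the paper's actual proofs live.
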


An easy consequence of Theorem~\ref{thm:madaule} (Theorem 2.4 in~\cite{madaule}) and the fact that
\[
n^{{\frac32}\gb}{Z}_n(\gb) = D_\infty^\gb \sum_{|s|=n}e^{-\gb(H(s) - {\frac32}\log n+\log D_\infty)}
= D_\infty^\beta\int_\dR e^{-\beta z}N_n(dz),
\]
 is that for any fixed $\gb_1,\gb_{2},\ldots,\gb_{k}\in(1,\infty)$ we have
\begin{align}
\bigl(n^{3\gb_i/2}Z_n(\gb_i),& i=1,2,\ldots,k\bigr) 
\Longrightarrow 
\bigl( D_{\infty}^{\gb_i}\sum_{k\ge1}e^{-\gb_ix_k}\sum_{y\in V_k}e^{-\gb_i y}, i=1,2,\ldots,k\bigr)
\end{align}
where $(x_k,k\ge 1)$ are points of a Poisson point process with intensity $\theta e^xdx, x\in\dR$, $V_{k}$'s are i.i.d.~copies of some point process $V$ and $D_{\infty}$ is the limiting derivative martingale independent of everything else.

Now let $\cN$ be a Poisson point process in $\dR\times [0,\infty)$ with intensity measure $e^x dtdx, (x,t)\in \dR\times [0,\infty)$. It is easy to see that for a finite interval $I\subset[0,\infty)$, the point process $\{x: (x,t)\in\cN, t\in I\}$ is Poisson point process  with intensity $|I| e^{x}dx$, which has the same distribution as $\{x_k - \log|I|,k\ge1\}$ where $\{x_k:k\ge 1\}$ is a Poisson point process  with intensity $e^{x}dx$. Thus for an interval $I$ of length $\theta D_{\infty}$ independent of $\cN$, we have
\begin{align*}
\biggl(\sum_{k\ge1}e^{-\gb_i(x_k-\log D_\infty)}&\sum_{y\in V_k}e^{-\gb_i y}, i=1,2,\ldots,k\biggr)
\equald
\biggl( \sum_{(x,t)\in\cN: t\in I}e^{-\gb_ix}\sum_{y\in V_{(x,t)}}e^{-\gb_i y}, i=1,2,\ldots,k\biggr)
\end{align*}
Now fix an integer $k\ge1$ and consider the set of vertices $v\in\vT,|v|=k$ in the tree $\vT$ at level $k$. Consider the collection of random variables $(n^{3\gb/2}Z_n(\gb;v),|v|=k)$ which clearly are i.i.d.~and by the above reasoning has the limit (in distribution)
\[
\biggl( \sum_{(x,t)\in\cN: t/\theta\in I(v)}e^{-\gb x}\sum_{y\in V_{(x,t)}}e^{-\gb y}, |v|=k\biggr)
\]
where $I(v),|v|=k$ are mutually disjoint intervals of length $\theta D_{\infty}(v)$ and $\{D_{\infty}(v),|v|=k\}$ are i.i.d.~copies of $D_{\infty}$. From here the proof of Theorem~\ref{mainthm} follows easily.\qed

The following revealing calculations are also direct consequences.
\begin{cor}
Under conditions of the theorem,
\[
\lim_{\gb\to\infty}\gC(1-1/\gb)^{-1}n^{{\frac32}}Z_n(\gb)^{1/\gb} = n^{{\frac32}} e^{-\min_{|s|=n} H(s)}
\]
and
\[
\lim_{\gb\to\infty} \E(||\{e^{-y},y\in V\}||_\gb)    \left(T_{\theta D_\infty}^{(1/\gb)}\right)^{1/\gb} \equald \E(\max_{y\in V}y)\cdot D_{\infty}\cdot G
\]
where $-\log G$ has Gumble extreme value distribution. 
\end{cor}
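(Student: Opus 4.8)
The plan is to read the corollary's two displays as, respectively, an elementary ``zero-temperature'' identity and its distributional refinement, and to establish the latter by feeding the $n\to\infty$ limit of Theorem~\ref{mainthm} into the Poisson calculus for $\ga$-stable subordinators before letting $\gb\to\infty$. First I would dispose of the first display: for each fixed $n$ one has $e^{-\gb\min_{|s|=n}H(s)}\le Z_n(\gb)\le 2^{n}e^{-\gb\min_{|s|=n}H(s)}$, so $Z_n(\gb)^{1/\gb}\to e^{-\min_{|s|=n}H(s)}$ as $\gb\to\infty$, and since $\Gamma(1-1/\gb)\to\Gamma(1)=1$ the displayed identity is immediate. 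The factor $\Gamma(1-1/\gb)^{-1}$ plays no role here; it is inserted so that, \emph{after} the $n\to\infty$ limit forced by Theorem~\ref{mainthm}, the normalized partition function converges in distribution to precisely the clean expression on the left of the second display. I will show that $\Gamma(1-1/\gb)^{-1}n^{3/2}Z_n(\gb)^{1/\gb}\Rightarrow\E\bigl(\|\{e^{-y},y\in V\}\|_\gb\bigr)\,\bigl(T^{(1/\gb)}_{\theta D_\infty}\bigr)^{1/\gb}$ as $n\to\infty$, and then that both iterated limits --- the left one ($n\to\infty$ followed by $\gb\to\infty$) and the right one ($\gb\to\infty$, trivially, followed by $n\to\infty$) --- produce the common law $\theta\,\E\bigl(\max_{y\in V}e^{-y}\bigr)\,D_\infty\,G$; this is the second display, with $\max_{y\in V}e^{-y}$ read in place of the informally written $\max_{y\in V}y$ and a harmless factor $\theta$ one may normalize away.

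The core step is the identification of the $n\to\infty$ limit law. By the consequence of Theorem~\ref{thm:madaule} recorded just above the corollary, $n^{3\gb/2}Z_n(\gb)\Rightarrow D_\infty^{\gb}\sum_{k\ge1}e^{-\gb x_k}\sum_{y\in V_k}e^{-\gb y}$, where $\{x_k\}$ is a Poisson process of intensity $\theta e^{x}dx$, the $V_k$ are i.i.d.\ copies of $V$, and $D_\infty$ is independent of both; so $n^{3/2}Z_n(\gb)^{1/\gb}\Rightarrow D_\infty\bigl(\sum_k e^{-\gb x_k}S_{\gb,k}\bigr)^{1/\gb}$ with $S_{\gb,k}=\sum_{y\in V_k}e^{-\gb y}$. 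Set $\ga=1/\gb\in(0,1)$. The mapping theorem applied to $x\mapsto e^{-\gb x}$ makes $\{e^{-\gb x_k}\}$ a Poisson process on $(0,\infty)$ of intensity $\theta\ga\,w^{-1-\ga}dw$, and independent marking by $S_{\gb,k}$ rescales this to intensity $\theta\ga\,\E(S_\gb^{\ga})\,w^{-1-\ga}dw$ --- here $\E(S_\gb^{\ga})=\E\|\{e^{-y},y\in V\}\|_\gb$, which is finite for $\gb>1$ by the hypotheses of Theorem~\ref{mainthm}. Hence $\sum_k e^{-\gb x_k}S_{\gb,k}\equald T^{(\ga)}_{\theta\Gamma(1-\ga)\E(S_\gb^{\ga})}$, where $T^{(\ga)}$ is the L\'evy stable subordinator with $\E[e^{-\lambda T^{(\ga)}_s}]=e^{-s\lambda^{\ga}}$ (L\'evy measure $\tfrac{\ga}{\Gamma(1-\ga)}w^{-1-\ga}dw$). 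Self-similarity $T^{(\ga)}_s\equald s^{1/\ga}T^{(\ga)}_1$ gives $\bigl(\sum_k e^{-\gb x_k}S_{\gb,k}\bigr)^{1/\gb}\equald \theta\,\Gamma(1-\ga)\,\E(S_\gb^{\ga})\,(T^{(\ga)}_1)^{\ga}$; multiplying by the independent $D_\infty$ and by $\Gamma(1-1/\gb)^{-1}$, and using $(T^{(\ga)}_{\theta D_\infty})^{\ga}\equald\theta D_\infty(T^{(\ga)}_1)^{\ga}$ (with $D_\infty$ independent of the subordinator), yields the intermediate convergence asserted above.

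Now let $\gb\to\infty$ in $\E\bigl(\|\{e^{-y},y\in V\}\|_\gb\bigr)(T^{(1/\gb)}_{\theta D_\infty})^{1/\gb}$. The decoration factor is handled by the $\ell^{\gb}\to\ell^{\infty}$ fact: $\bigl(\sum_{y\in V}e^{-\gb y}\bigr)^{1/\gb}\downarrow\max_{y\in V}e^{-y}$ monotonically, so monotone convergence gives $\E\|\{e^{-y},y\in V\}\|_\gb\downarrow\E\max_{y\in V}e^{-y}$, finite because $\E\|\{e^{-y},y\in V\}\|_{\gb}<\infty$ for one, hence all, $\gb>1$. The subordinator factor is handled by the classical degeneration $(T^{(\ga)}_1)^{\ga}\Rightarrow G$ as $\ga\to0$, with $\mathbb P(G\le y)=e^{-1/y}$ (equivalently $-\log G$ has the Gumbel law): this is transparent from the series representation $T^{(\ga)}_1=\Gamma(1-\ga)^{-1/\ga}\sum_{k\ge1}\Sigma_k^{-1/\ga}$, $\Sigma_k$ a standard Gamma$(k,1)$ walk, whence $(T^{(\ga)}_1)^{\ga}=\Gamma(1-\ga)^{-1}\Sigma_1^{-1}\bigl(1+\sum_{k\ge2}(\Sigma_1/\Sigma_k)^{1/\ga}\bigr)^{\ga}\to\Sigma_1^{-1}$ almost surely (each ratio $\Sigma_1/\Sigma_k<1$), and $\Sigma_1^{-1}$ is $\mathrm{Exp}(1)^{-1}=G$. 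Combining, the left-hand side of the second display converges in distribution to $\theta\,\E\bigl(\max_{y\in V}e^{-y}\bigr)\,D_\infty\,G$. For the right-hand side, the definition of $N_n$ gives $n^{3/2}e^{-\min_{|s|=n}H(s)}=D_\infty e^{-\min N_n}$, and $e^{-\min N_n}\Rightarrow e^{-\min N_\infty}$ by continuity of the minimum functional under $N_n\Rightarrow N_\infty$; a direct extreme-value computation --- the image of the Poisson$(\theta e^{x}dx)$ process under $x\mapsto e^{-x}$, marked by $\max_{y\in V}e^{-y}$, is Poisson$\bigl(\theta\,\E(\max_{y\in V}e^{-y})\,w^{-2}dw\bigr)$, whose maximum has distribution function $\exp\bigl(-\theta\,\E(\max_{y\in V}e^{-y})/x\bigr)$ --- gives $e^{-\min N_\infty}\equald\theta\,\E(\max_{y\in V}e^{-y})\,G$, so the right-hand side also converges to $\theta\,\E(\max_{y\in V}e^{-y})\,D_\infty\,G$, the same law.

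The main obstacle is not the algebra but two points of rigor. First, the minimum functional $N\mapsto\min N$ must be almost surely continuous at $N_\infty$ in the topology for which $N_n\Rightarrow N_\infty$: a priori mass could escape to $-\infty$, but this is ruled out by the tightness of the branching random walk's minimal displacement together with the fact that $N_\infty$ has an almost surely finite leftmost atom and only finitely many atoms below any level --- standard ingredients of the extremal theory underlying Theorem~\ref{thm:madaule}. Second, one must respect the order of limits: rather than interchange $\gb\to\infty$ with the pre-limit partition functions, I would apply $\gb\to\infty$ only to the \emph{already identified} limit law $\E\bigl(\|\{e^{-y},y\in V\}\|_\gb\bigr)(T^{(1/\gb)}_{\theta D_\infty})^{1/\gb}$, where monotone convergence and the stable fact apply directly, and then simply observe that the answer coincides with the $n\to\infty$ limit of the right-hand side; no genuine interchange of limits is needed. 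Everything else is routine Poisson bookkeeping.
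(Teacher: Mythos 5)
Your proposal is correct and follows essentially the same route as the paper: the paper's one-sentence proof consists exactly of your core step, namely identifying the $n\to\infty$ limit of $\gC(1-1/\gb)^{-\gb}n^{3\gb/2}Z_n(\gb)$ as an $\ga$-stable subordinator ($\ga=1/\gb$) stopped at the independent random time $\theta D_\infty\,\E\|\{e^{-y},y\in V\}\|_\gb$, obtained from the decorated Poisson limit by the same mapping/marking calculus you employ. The extra details you supply --- the elementary bounds giving the first display, the monotone $L^\gb\to L^\infty$ convergence of the decoration norm, the degeneration $(T^{(\ga)}_1)^\ga\to 1/\mathrm{Exp}(1)$, and the matching extreme-value computation for $n^{3/2}e^{-\min_{|s|=n}H(s)}$ (with $\max_{y\in V}y$ read as $\max_{y\in V}e^{-y}$ and the factor $\theta$ made explicit) --- simply fill in what the paper leaves implicit, and your reading of the statement's informalities is consistent with the paper's intent.
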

\begin{proof}
A  consequence of Theorem~\ref{mainthm} is that the limiting distribution of $\gC(1-1/\gb)^{-\gb}  n^{3\gb/2}Z_{n}(\gb)$ is the same as a $\ga-$stable subordinator $T_{t}^{(\ga)}$ stopped at an independent r.v.~$\theta D_{\infty} \E(||\{e^{-y},y\in V\}||_\gb)$, where $\ga=1/\gb$, and $||g(y), y\in V||_\beta, \beta > 1,$ denotes
the usual $L^\beta-$norm, $\big(\int_{\dR}e^{-y\beta}V(dy)\big)^{\frac{1}{\beta}}$ with respect to the
decorating points.  
\end{proof}

As a closing remark one may view the ``genealogical structure'' of the resulting a.s.~defined
strong disorder cascade probability limit as follows:   If vertices are chosen from the $n$-th level according to the cascade measure in strong disorder, most of the branching occurs either within distance $o(n)$ from the root or within distance $o(n)$ from the $n$-th level. The branching near the $n$-th level gives rise to the decoration Point process in the limiting decorated Poisson process, whereas the Poisson process arises out of the time spent without any branching; see~\cites{derridaspohn, arguinbovierkistler} for 
comparison with branching Brownian motion.
Our result gives the structure near the root within distance $O(1)$, as discussed 
earlier. See Figure~\ref{fig1} for a graphical depiction.

\begin{figure}[htbp]
   \centering
   \includegraphics[width=6cm, height=3cm]{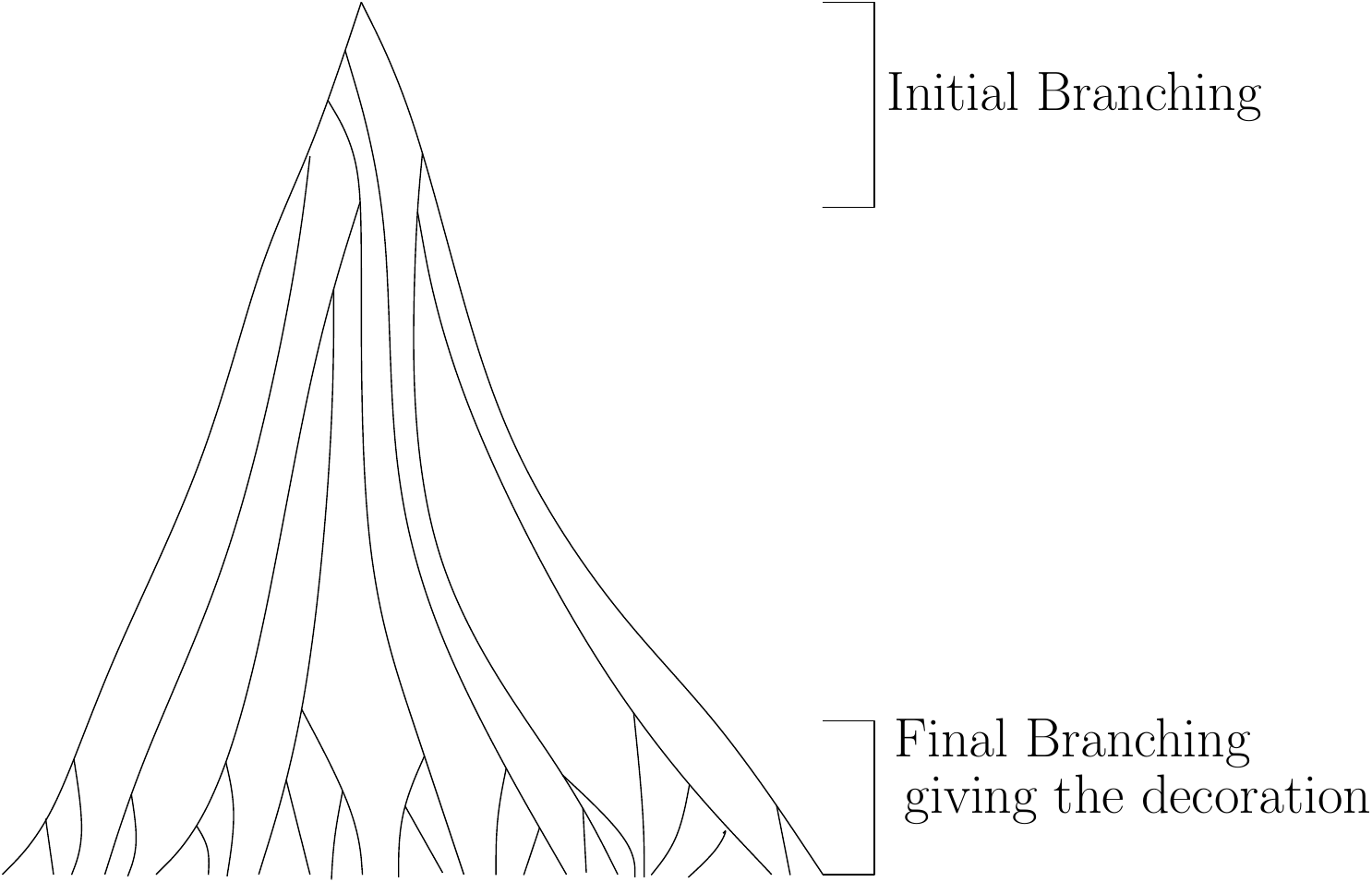} 
   \caption{Geneological structure in Branching Random Walk}
   \label{fig1}
\end{figure}

Another genealogical structure can be identified in terms of the L\`evy stable subordinator
$\{T_s^{(\ga)}: s\ge 0\}$ by viewing it as a continuous state branching process (csbp),
 in a manner as was done 
in  \cite{bertoinlegall00}  in describing the genealogy of Neveu's csbp
associated with another disordered system; namely, Derrida's  generalized random energy model (GREM). In particular it was shown in (\cite{bertoinlegall00}, Theorem 4) that 
the genealogy of Neveu's csbp defines a Bolthausen-Sznitman coalescent (BSC).
This could be accomplished by exploiting an alternative cascade version of GREM, due to Ruelle in 
\cite{ruelle87}.
Now observe that 
the (BSC) is a $\Lambda$-coalescent for a uniform distribution $\Lambda$
on $[0,1]$; see \cite{pitman}.  So,  in view of recent results of \cite{BBetal05}, the genealogy
of $\{T_s^{(\ga)}: s\ge 0\}$ is that of a $\Lambda-$coalescent for which
$\Lambda$ is a Beta distribution with parameters ${\beta_c/ \beta}$
and $1-{\beta_c/\beta}.$   Since ${\beta_c/ \beta} < 1$ under strict strong disorder, the results here establish interesting points of contrast  and comparison for these respective models of disorder; also see \cite{derridaspohn} for  other observations in this regard.

\vskip.1in
\noindent{\bf Acknowledgments.} This work began when PD was a Simons Postdoctoral fellow at the Courant Institute of Mathematical Sciences, New York University, where EW was a visitor. EW is also partially supported by a grant DMS-1408947 from the National Science Foundation.


\bibliographystyle{plain}
\bibliography{sorder}

%
%
\end{document}